\definecolor{cobalt}{rgb}{0.0, 0.28, 0.67}
\newtheorem{definition}{Definition}
\newtheorem{lemma}{Lemma}
\newtheorem{theorem}{Theorem}
\newtheorem{conjecture}{Conjecture}
\newcommand\ZZ{\mathbb Z}
\newcommand\FF{\mathbb F}
\newcommand\QQ{\mathbb Q}
\begin{document}



\author{Nikita Kalinin}
\title{Wolstenholme's theorem over Gaussian integers}
\address{Guangdong Technion Israel Institute of Technology (GTIIT),
241 Daxue Road, Shantou, Guangdong Province 515603, P.R. China,  Technion-Israel Institute of Technology, Haifa, 32000, Haifa District, Israel, nikaanspb@gmail.com}
\maketitle

\begin{abstract}
This paper establishes an extension of Wolstenholme's theorem to the ring of Gaussian integers $\mathbb{Z}[i]$. For a prime $p > 7$, we prove that the sum $S_p$ of inverses of Gaussian integers in the set $\{n+mi \mid 1 \leq n, m \leq p-1, \gcd(p, mi+n)=1\}$ satisfies the congruence $S_p \equiv 0 \pmod{p^4}$. We further generalize this result to higher-power sums $S_p^{(k)}$, demonstrating structured divisibility patterns modulo powers of $p$. We propose some conjectures generalising the connections between classical Wolstenholme's theorem and binomial coefficients. Special cases and irregularities for small primes ($p \leq 1000$) are explicitly computed and tabulated.

\end{abstract}

\section{Introduction}
The classical Wolstenholme's theorem \cite{wolstenholme1862certain} states that for any prime $p \geq 5$,

$$1+\frac{1}{2}+\frac{1}{3}+\dots + \frac{1}{p-1}\equiv 0 \pmod {p^2}.$$

This result has been generalized to various settings \cite{zhao2007bernoulli,mevstrovic2013some,lucas1878congruences,glaisher1900congruences,alkan1994variations,carlitz1954note,mcintosh1995converse, andrews1999q, mestrovic2011wolstenholme, sun2022series, sun2000congruences}. 

Let $i^2 = -1$ and $p$ be an integer prime number. Our main result generalizes Wolstenholme's theorem to Gaussian integers:

\begin{theorem}[Wolstenholme's Congruence for Gaussian Integers]
\label{th1}
For a prime $p > 5$, the sum $S_p$ of reciprocals of Gaussian integers modulo $p$ satisfies:
\begin{equation}
\label{eq1}
S_p = \sum_{\substack{1 \leq n, m \leq p-1 \\ (p, m^2+n^2)=1}} \frac{1}{n+mi} \equiv 0 \pmod{p^4}.
\end{equation}
\end{theorem}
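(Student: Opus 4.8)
The plan is to prove Theorem~\ref{th1} by working throughout in the ring $\mathbb{Z}[i]/p^{4}\mathbb{Z}[i]$, in which every Gaussian integer $n+mi$ with $(p,n^{2}+m^{2})=1$ is invertible, and to embed $S_p$ into the family of power sums
\[
S_p^{(k)}=\sum_{z\in A}z^{-k},\qquad A=\{\,n+mi:1\le n,m\le p-1,\ (p,n^{2}+m^{2})=1\,\},
\]
so that $S_p=S_p^{(1)}$; here $A$ is viewed as a subset of $\mathbb{Z}[i]/p^{4}\mathbb{Z}[i]$, which is legitimate since $(n,m)\mapsto n+mi$ is injective for $0\le n,m<p<p^{4}$. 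There are only two moving parts: a cheap mod-$p$ vanishing statement for the $S_p^{(k)}$, and one well-chosen symmetry of $A$ that bootstraps it to the full modulus $p^{4}$.

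First I would establish that $S_p^{(k)}\equiv 0 \pmod p$ whenever $(p-1)\nmid k$; for $p>5$ this covers $k=1,2,3,4$. Reducing mod $p$, the set $A$ becomes $\{\,a+bi:a,b\in\mathbb{F}_p^{\times},\ a^{2}+b^{2}\ne 0\,\}\subset\mathbb{Z}[i]/p$, and the substitution $a=bt$ factors the sum as
\[
S_p^{(k)}\ \equiv\ \Big(\sum_{b\in\mathbb{F}_p^{\times}}b^{-k}\Big)\Big(\sum_{\substack{t\in\mathbb{F}_p^{\times}\\ t^{2}+1\ne 0}}(t+i)^{-k}\Big)\pmod p .
\]
The first factor, after reindexing $b\mapsto b^{-1}$, is $\sum_{c\in\mathbb{F}_p^{\times}}c^{k}$, which vanishes precisely because $(p-1)\nmid k$. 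Nothing here uses whether $\mathbb{Z}[i]/p$ is a field, so both residue classes of $p$ mod $4$ are handled uniformly; in particular $S_p^{(4)}\equiv 0\pmod p$.

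The key step is the symmetry. The affine map $\rho(z)=iz+p$ sends $n+mi$ to $(p-m)+ni$, hence \emph{permutes} $A$, because $(n,m)\mapsto(p-m,n)$ is a bijection of $\{1,\dots,p-1\}^{2}$ that preserves $n^{2}+m^{2}\bmod p$, and reduction mod $p^{4}$ intertwines the two descriptions. Since $p^{4}=0$ in the ring, the binomial expansion
\[
\rho(z)^{-k}=(iz+p)^{-k}=\sum_{j=0}^{3}\binom{k+j-1}{j}\,i^{\,j-k}\,p^{\,j}\,z^{-(k+j)}
\]
is an exact finite identity; summing it over $z\in A$ and using that $\rho$ is a bijection of $A$ (so the left side sums to $S_p^{(k)}$) isolates the $j=0$ term and yields, for every $k\ge 1$,
\[
(1-i^{-k})\,S_p^{(k)}\ \equiv\ \sum_{j=1}^{3}\binom{k+j-1}{j}\,i^{\,j-k}\,p^{\,j}\,S_p^{(k+j)}\pmod{p^{4}} .
\]

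Finally I would run a three-line descent on this relation, using that $2$ and $1\pm i$ are units modulo every power of $p$ (as $(1+i)(1-i)=2$ and $p$ is odd). Taking $k=3$ gives $(1-i)S_p^{(3)}\equiv -3p\,S_p^{(4)}\pmod{p^{2}}$, so $S_p^{(4)}\equiv 0\pmod p$ forces $S_p^{(3)}\equiv 0\pmod{p^{2}}$. Taking $k=2$ gives $2S_p^{(2)}\equiv -2ip\,S_p^{(3)}+3p^{2}S_p^{(4)}\pmod{p^{3}}$, and the two facts just obtained force $S_p^{(2)}\equiv 0\pmod{p^{3}}$. Taking $k=1$ gives $(1+i)S_p\equiv p\,S_p^{(2)}+ip^{2}S_p^{(3)}-p^{3}S_p^{(4)}\pmod{p^{4}}$, each term of which now vanishes, whence $(1+i)S_p\equiv 0$ and so $S_p\equiv 0\pmod{p^{4}}$. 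The only genuine arithmetic input beyond bookkeeping is the mod-$p$ vanishing of $S_p^{(4)}$; I expect the point requiring the most care to be the verification that $\rho$ permutes $A$ exactly, rather than merely its reduction mod $p$, since that is what licenses summing the power-series identity term by term. The same relation, applied at larger $k$, should also produce the divisibility patterns promised for the higher sums $S_p^{(k)}$.
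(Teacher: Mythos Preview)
Your proof is correct and takes a genuinely different route from the paper's. The paper groups the summands into $8$-tuples $T(m,n,1)$ under the full dihedral symmetry of the square acting on $(n,m)$, verifies by direct (Macaulay2-assisted) expansion that the numerator of each $8$-tuple is divisible by $p^{3}$ with leading term proportional to $(m^{4}-6m^{2}n^{2}+n^{4})/(m^{2}+n^{2})^{4}$, and then proves a separate lemma that the sum of these leading terms vanishes mod $p$ via the power-sum identity $\sum_{n=1}^{p-1}n^{q}\equiv 0$ for $(p-1)\nmid q$. You instead exploit only the order-$4$ rotation $\rho(z)=iz+p$, and rather than computing any closed-form numerators you derive from it a recursion expressing $(1-i^{-k})S_p^{(k)}$ in terms of $p^{j}S_p^{(k+j)}$; the single arithmetic input $S_p^{(4)}\equiv 0\pmod p$, obtained from your $a=bt$ factorization (which is a cleaner packaging of the same power-sum fact the paper uses in its Lemma), then cascades down through $S_p^{(3)}$ and $S_p^{(2)}$ to $p^{4}\mid S_p^{(1)}$. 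What you gain is a computation-free argument that simultaneously delivers the paper's Theorem~2 on $S_p^{(2)},S_p^{(3)},S_p^{(4)}$ and, since $1-i^{-k}$ is a unit exactly when $4\nmid k$, explains structurally why the exponent $m(k)$ in the paper's Conjecture~1 is periodic of period $4$. What the paper's approach buys is an explicit formula for $S_p/p^{3}\bmod p$ as a concrete rational-function sum over $(m,n)$, which is better positioned for a Glaisher-type refinement modulo $p^{5}$.
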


\subsection{Plan of the paper} In Section~\ref{sec2} we prove this theorem. Section~\ref{sec3} contains a generalisation of the above congruence for sums $S_p^{{k}}$ of the $k$-th powers $\frac{1}{(mi+n)^k}$ of fractions for $k=2,3,4$ and a conjecture about the general case is stated. We also list cases when we have a congruence modulo higher power than expected for $p<1000$ and $k\leq 12$. Section~\ref{sec4} contains discussion, speculations, and more conjectures.

\subsection{Motivation} Our initial aim was to establish an analogue of Wolstenholme's theorem for finite fields with $p^k$ elements. The classical Wolstenholme's theorem can be formulated as follows. Let $p$ be a prime integer number and $\ZZ\to\ZZ_p= \mathbb F_p$ be the map of taking residues mod $p$. Consider the invertible elements  of $\mathbb F_p$ and lift them in $\ZZ$ as numbers $1,2,\dots, p-1$. Take their inverses in $\QQ$, compute the sum $$\sum_{k=1}^{p-1}\frac{1}{k}=\frac{A}{B},$$ then the denominator $B$ is coprime with $p$ and the numerator $A$ is divisible by $p$ (since $\sum_{k=1}^{p-1} \frac{1}{k}=0$ if we consider the elements $k\in \FF^{*}_p$). Wolstenholme's theorem states that the numerator $A$ is divisible by a higher than expected power of $p$, namely $p^2$.

For a finite field $\mathbb F^*_{p^k}$ of the characteristic $p$ with $p^k$ elements, a natural approach is to lift the sum $$\sum\limits_{a\in \mathbb F^*_{p^k}} \frac{1}{a}=0\in \mathbb F^*_{p^k}$$ to a ring of characteristic $0$, compute the sum of fractions in this ring  and then establish divisibility of the numerator by a power of $p$ (again, divisibility by $p^1$ is automatic). 

After some unsuccessful experiments, we realized $\mathbb F_{p^2}$ as $\mathbb F_p[i]$ for the case $p\equiv 3 \pmod 4$, then found and proved the above congruence. Note that $\mathbb F^*_{p^2}$ contains $p^2-1$ elements but in Theorem~\ref{th1} we compute the sum over $(p-1)^2$ elements (for $p=4k+3$) or $(p-1)^2-(p-1)$ elements (for $p=4k+1$).  It would be interesting to establish similar congruences for extensions of $\mathbb F_p$ of higher degree.

\section{Proof of Theorem~\ref{th1}} 
\label{sec2}

%
%
%
%
%
%

Before proceeding with the proof, we require a lemma. 

\begin{lemma}\label{lem} For each prime number $p>5$ the following congruence holds:

$$\sum_{\substack{1\leq n,m \leq {p-1}\\(p,m^2+n^2)=1}}\frac{(m^4-6m^2n^2+n^4)}{(m^2+n^2)^4}\equiv 0 \pmod p.$$

\end{lemma}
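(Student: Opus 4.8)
The plan is to deduce the lemma from the stronger fact that the \emph{full} Gaussian reciprocal-power sum vanishes modulo $p$, and then take real parts. Expanding $(n-mi)^4 = (n^4 - 6n^2m^2 + m^4) - 4nm(n^2-m^2)\,i$ and noting that $(n^2+m^2)^4 = N(n+mi)^4$ with $N$ the norm, one sees that each summand of the lemma equals $\mathrm{Re}\bigl((n+mi)^{-4}\bigr)$, the reciprocal being taken in $\mathbb{Q}(i)$. Since $\gcd(p, n^2+m^2)=1$, this rational number has a well-defined image in $R := \mathbb{Z}[i]/p\mathbb{Z}[i]$, and it therefore suffices to prove
$$T \;:=\; \sum_{\substack{1\le n,m\le p-1\\ (p,\,n^2+m^2)=1}} (n+mi)^{-4} \;\equiv\; 0 \pmod{p\,\mathbb{Z}[i]},$$
because writing $T = (A+Bi)/D$ with $A,B,D\in\mathbb{Z}$ and $p\nmid D$, the congruence $T\equiv 0$ forces $p\mid A$ and $p\mid B$, and $A/D$ is exactly the sum in the lemma.

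Next I would carry out the computation inside the finite ring $R = \mathbb{F}_p[i]$. Reducing $n,m$ modulo $p$ sets up a bijection from the index set onto $\{\,a+bi : a,b\in\mathbb{F}_p^{*}\,\}$, under which $(p,n^2+m^2)=1$ becomes $a^2+b^2\ne 0$; and $a^2+b^2\ne 0$ is precisely the condition that $a+bi$ be a unit of $R$ (its norm $(a+bi)(a-bi)=a^2+b^2$ must be invertible in $\mathbb{F}_p$, and conversely this makes $a+bi$ invertible). Thus $T$ is the image of $\sum_{a,b\in\mathbb{F}_p^{*},\ a^2+b^2\ne 0}(a+bi)^{-4}\in R$.

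The decisive step is a scaling substitution. Fix $b\in\mathbb{F}_p^{*}$ and write $a=bt$ with $t$ ranging over $\mathbb{F}_p^{*}$; then $a^2+b^2=b^2(t^2+1)$, so the inner condition becomes $t^2\ne -1$, and $(a+bi)^{-4}=b^{-4}(t+i)^{-4}$ (here $t+i$ is a unit of $R$ exactly because $t^2+1\ne 0$). Hence the double sum factors as
$$\sum_{\substack{a,b\in\mathbb{F}_p^{*}\\ a^2+b^2\ne 0}}(a+bi)^{-4} \;=\; \Bigl(\,\sum_{b\in\mathbb{F}_p^{*}} b^{-4}\Bigr)\Bigl(\,\sum_{\substack{t\in\mathbb{F}_p^{*}\\ t^2\ne -1}}(t+i)^{-4}\Bigr).$$
Reindexing $b\mapsto b^{-1}$, the first factor equals $\sum_{b\in\mathbb{F}_p^{*}}b^4$, which vanishes in $\mathbb{F}_p$ whenever $p-1\nmid 4$ (pick $b_0$ with $b_0^4\ne 1$, then $b_0^4\sum_b b^4=\sum_b b^4$). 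Since $p>5$ is prime we have $p-1>4$, hence $p-1\nmid 4$, so the first factor is $0$; therefore $T\equiv 0\pmod{p\mathbb{Z}[i]}$, and taking real parts proves the lemma.

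The argument is short and I foresee no genuine obstacle; the only care needed is bookkeeping — matching $\gcd(p,n^2+m^2)=1$ with ``$a+bi$ is a unit of $\mathbb{F}_p[i]$'' (the ring structure of $\mathbb{F}_p[i]$ depends on $p\bmod 4$, but the scaling trick avoids a case split), checking that $a\mapsto bt$ is a bijection of $\mathbb{F}_p^{*}$ for fixed $b$, and handling the mod-$p$ congruence of Gaussian rationals. If one prefers to avoid the factorization, a case split works equally well: for $p\equiv 3\pmod 4$ one has $\mathbb{F}_p[i]=\mathbb{F}_{p^2}$ and $T$ splits as $\sum_{z\in\mathbb{F}_{p^2}^{*}}z^{-4}-\sum_{a\in\mathbb{F}_p^{*}}a^{-4}-\sum_{b\in\mathbb{F}_p^{*}}(bi)^{-4}$, three power sums over cyclic groups that each vanish for $p>5$; for $p\equiv 1\pmod 4$ one has $\mathbb{F}_p[i]\cong\mathbb{F}_p\times\mathbb{F}_p$ and each coordinate of $T$ collapses to $(p-3)\sum_{x\in\mathbb{F}_p^{*}}x^{-4}=0$.
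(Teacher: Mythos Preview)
Your proof is correct and takes a genuinely different route from the paper's. The paper works purely over $\mathbb{F}_p$: it splits the numerator as $(m^2+n^2)^2-8m^2n^2$, replaces each reciprocal $\frac{1}{m^2+n^2}$ by $(m^2+n^2)^{p-2}$ via Fermat, expands by the binomial theorem, and checks that in every monomial $m^sn^t$ that appears the total degree is too small for $p-1$ to divide both exponents, so the power sums $\sum_n n^s$ and $\sum_m m^t$ kill everything. Your argument is more structural: you recognize each summand as $\mathrm{Re}\bigl((n+mi)^{-4}\bigr)$, reduce to showing $\sum (n+mi)^{-4}\equiv 0$ in $\mathbb{F}_p[i]$, and your scaling substitution $a=bt$ factors this as $\bigl(\sum_{b\in\mathbb{F}_p^*} b^{-4}\bigr)\cdot(\text{inner sum})$, killed by a single power-sum vanishing for $p>5$. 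What you gain is economy and a direct tie to the Gaussian structure---in effect you have proved the full congruence $S_p^{(4)}\equiv 0\pmod{p}$ (case~(4) of the paper's Theorem~2) and then read off the lemma as its real part, whereas the paper establishes the lemma first and invokes it separately for each $k$ after an $8$-tuple reduction. The paper's method, on the other hand, stays entirely inside $\mathbb{F}_p$ and would adapt more mechanically to similar-looking sums whose summands are not naturally the real part of a Gaussian reciprocal.
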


\begin{proof}
We need to show that both sums below vanish modulo $p$:

$$\sum_{\substack{1\leq n,m \leq {p-1}\\(p,m^2+n^2)=1}}\frac{m^2n^2}{(m^2+n^2)^4}, \text{ and} $$ $$ \sum_{\substack{1\leq n,m \leq {p-1}\\(p,m^2+n^2)=1}}\frac{(m^4+2m^2n^2+n^4)}{(m^2+n^2)^4} =  \sum_{\substack{1\leq n,m \leq {p-1}\\(p,m^2+n^2)=1}}\frac{1}{(m^2+n^2)^3}.$$

The key observation is that modulo $p$ for $m^2+n^2\ne 0 \pmod p$, we have $$\frac{1}{m^2+n^2} \equiv (m^2+n^2)^{p-2} \pmod p.$$ Moreover, the power sums $\sum_{n=1}^{p-1} n^q \equiv 0 \pmod{p}$ vanish unless $q$ is divisible by $p-1$. 

Thus, if $p>5$ 

$$\sum_{\substack{1\leq n,m \leq p-1\\(p,m^2+n^2)=1}}\frac{m^2n^2}{(m^2+n^2)^4} \equiv \sum_{\substack{1\leq n,m \leq {p-1}\\(p,m^2+n^2)=1}}(m^2n^2)(m^2+n^2)^{p-5}\equiv $$

$$\equiv \sum_{1\leq n,m \leq {p-1}}(m^2n^2)(m^2+n^2)^{p-5} \equiv \sum_{0\leq n,m \leq {p-1}}(m^2n^2)(m^2+n^2)^{p-5}  \pmod p.$$

When we expand the parenthesis, we have terms of the form $$n^tm^s, y+s=2p-6$$ so it is impossible that $p-1$ divides both $s$ and $t$ unless $2p-6=0$, i.e. $p=3$. If, for example, $p-1$ does not divide $s$ then for each fixed  $m$ we have $$\sum_{1\leq n\leq p-1} n^sm^t\equiv  0\pmod p.$$ 

Thus, for each term $n^sm^t, s+t = 2p-6$ we have 
$$\sum\limits_{1\leq n,m\leq p-1} n^sm^t\equiv 0 \pmod p.$$

Similarly, 

$$\sum_{\substack{1\leq n,m \leq {p-1}\\(p,m^2+n^2)=1}}\frac{1}{(m^2+n^2)^3} \equiv \sum_{1\leq n,m \leq p-1}(m^2+n^2)^{p-4} \pmod p.$$

The terms in this sum are $n^tm^s, y+s=2p-8$ and we proceed as above.

\end{proof}

\begin{definition}[Conjugate Tuples]
For distinct indices $n\ne m$, define the following $8$-tuple $T(m,n,k)$ as  
\begin{multline}\label{eq2}T(m,n,k)=\frac{1}{(n+im)^k}+\frac{1}{(p-n+im)^k}+\frac{1}{(n+i(p-m))^k}+\\
+\frac{1}{(p-n+i(p-m))^k}+\frac{1}{(m+in)^k}+\frac{1}{(p-m+in)^k}+\\
+\frac{1}{(m+i(p-n))^k}+\frac{1}{(p-m+i(p-n))^k}.\end{multline}
\end{definition}

\begin{proof}[Proof of Theorem~\ref{th1}]
Consider a particular element  $\frac{1}{n+mi}$ in \eqref{eq1}. Consider the corresponding $8$-tuple $T(m,n,1)$. All such tuples are parametrised by pairs $(n,m)$ with $1\leq n<m\leq \frac{p-1}{2}$. Direct computation shows that the product of the denominators of fractions in $T(m,n,1)$ is 
 $$
 m^{8}+4m^{6}n^{2}+6m^{4}n^{4}+4m^{2}n^{6}+n^{8}+ p(\dots)$$
and the numerator of $T(n,m,1)$ is
      
$$\left(-2i+2\right)p^3 (m^{4}-6m^{2}n^{2}+n^{4})+ p^4(\dots).$$

Note that the numerator is already divisible by $p^3$  (see Section~\ref{sec_tnm} for explicit expansions).

Now we can rewrite 

$$S_p = \sum_{\substack{1 \leq n, m \leq p-1 \\ (p, m^2+n^2)=1}} \frac{1}{n+mi} = \frac{1}{8}\sum_{\substack{1 \leq n, m \leq p-1 \\ (p, m^2+n^2)=1}} T(n,m,1) \equiv  $$
$$\equiv p^3\sum_{\substack{1\leq n,m \leq {p-1}\\(p,m^2+n^2)=1}}\frac{(m^4-6m^2n^2+n^4)}{(m^2+n^2)^4} \pmod{p^4}.$$

Theorem~\ref{th1} follows now from Lemma~\ref{lem}.

%
%
%
%
%
%

\end{proof}

\section{Higher powers}
\label{sec3}
 Having established the base case, we now consider higher powers.

 \begin{theorem}
 \label{th2}
For a prime $p>5$ and $k\geq 1$, define:
\begin{equation}
S_p^{(k)} = \sum_{\substack{1\leq n,m\leq p-1\\ (p,m^2+n^2)=1}} \frac{1}{(n+mi)^k}
\end{equation}
Then the following congruences hold:
\begin{enumerate}
\item $S_p^{(1)} \equiv 0 \pmod{p^4}$ 
\item $S_p^{(2)} \equiv 0 \pmod{p^3}$ 
\item $S_p^{(3)} \equiv 0 \pmod{p^2}$ 
\item $S_p^{(4)} \equiv 0 \pmod{p}$
\end{enumerate}
\end{theorem}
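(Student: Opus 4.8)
The plan is to mimic the proof of Theorem~\ref{th1}, treating all four cases uniformly via the $8$-tuple $T(m,n,k)$ introduced in \eqref{eq2}, and then reducing each to a power-sum vanishing statement modulo $p$ of the same flavour as Lemma~\ref{lem}. First I would record the key structural fact about $T(m,n,k)$: grouping the eight reciprocals of Gaussian integers associated to $(n,m)$ with $1\le n<m\le \frac{p-1}{2}$, the common denominator is $(m^2+n^2)^{4k}+p(\dots)$ (a unit times a power of the norm, modulo $p$), while the numerator is divisible by a high power of $p$. Concretely, one expects the numerator of $T(m,n,k)$ to be divisible by $p^{5-k}$ for $k=1,2,3,4$, with leading term $p^{5-k}$ times a polynomial $P_k(m,n)$ in $m,n$ whose coefficients are Gaussian integers. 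For $k=1$ this is the $(-2i+2)p^3(m^4-6m^2n^2+n^4)$ computed in Section~\ref{sec_tnm}; for $k=2,3,4$ one carries out the analogous (routine but lengthy) Taylor expansion in $p$ of each of the eight terms and collects. Then $S_p^{(k)} = \frac{1}{8}\sum T(n,m,k) \equiv p^{5-k}\sum_{(p,m^2+n^2)=1} P_k(m,n)/(m^2+n^2)^{4k} \pmod{p^{6-k}}$, so it suffices to show each residual sum vanishes modulo $p$.

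Next I would reduce each residual sum to power sums. As in Lemma~\ref{lem}, modulo $p$ we replace $1/(m^2+n^2)$ by $(m^2+n^2)^{p-2}$ and extend the summation from $\{(p,m^2+n^2)=1\}$ to all $1\le n,m\le p-1$ and then to $0\le n,m\le p-1$ (the extra terms are divisible by $p$). We are then left with $\sum_{0\le n,m\le p-1} P_k(m,n)(m^2+n^2)^{(p-2)\cdot 4k-\deg}$-type expressions; expanding everything by the binomial theorem gives a sum of monomials $n^s m^t$ with $s+t$ bounded by a fixed function of $k$ and $p$ that, for $p$ large enough (here $p>5$ suffices as in the Lemma), cannot have both $s$ and $t$ divisible by $p-1$. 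The standard fact $\sum_{n=0}^{p-1} n^q \equiv 0 \pmod p$ unless $(p-1)\mid q$ then kills every monomial, hence the whole residual sum. The arithmetic constraint $s+t \le 2p - c_k$ for a small constant $c_k$ must be checked to rule out the degenerate case $s=t=p-1$ (equivalently $2p-2 \le s+t$), which is exactly where the hypothesis $p>5$ enters; for the larger values of $k$ one should double-check that no borderline term survives and, if necessary, sharpen the bound on $p$.

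The main obstacle is the explicit computation of the numerator of $T(m,n,k)$ for $k=2,3,4$ and the identification of the polynomial $P_k(m,n)$: one must verify that the lower-order-in-$p$ contributions really do start at $p^{5-k}$ (not earlier, which would only strengthen the result, but not later, which would break the argument) and extract the leading coefficient polynomial cleanly. This is a finite symbolic computation — expand each of the eight denominators $(\,p^{\varepsilon_1}\! - n + i(p^{\varepsilon_2}\! - m))^{-k}$ as $(n+im)^{-k}$ times a geometric-type correction in $p$, sum the eight contributions, and watch the cancellations — but it is bookkeeping-heavy and error-prone, so I would organize it by symmetry (the tuple is symmetric under $n\leftrightarrow m$ and under the four sign/shift operations) to cut the work and to make the vanishing of the $p^{\le 4-k}$ coefficients manifest. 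A secondary, milder obstacle is confirming that $P_k$ really is a polynomial whose monomial degrees stay in the safe range for the power-sum argument; if for some $k$ the naive degree count is too large, one falls back on expressing the residual sum through the known vanishing of $\sum n^q$ for the relevant residues of $q$ mod $p-1$, exactly as in the proof of Lemma~\ref{lem}.
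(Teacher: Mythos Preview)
Your approach is correct and matches the paper's: group into the $8$-tuples $T(m,n,k)$, extract the lowest power of $p$ from the numerator, and reduce the remaining sum to a power-sum vanishing modulo $p$. One slip: the numerator of $T(m,n,k)$ is divisible by $p^{4-k}$, not $p^{5-k}$ (your own citation of the $p^{3}$ term for $k=1$ already shows this), so the conclusion after the residual-sum step is $S_p^{(k)}\equiv 0\pmod{p^{5-k}}$, exactly what the theorem asserts. The paper's explicit (Macaulay2) computation also uncovers a simplification you do not anticipate: for each $k=2,3,4$ the leading numerator term carries a factor $(m^{2}+n^{2})^{4(k-1)}$ that cancels against the denominator, so the residual sum is literally $\sum (m^{4}-6m^{2}n^{2}+n^{4})/(m^{2}+n^{2})^{4}$ in all four cases and Lemma~\ref{lem} applies verbatim, with no separate degree-counting required.
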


%

\begin{proof}
(1) is Theorem~\ref{th1}, it is already proven. For (2),(3), and (4), the proofs follow the similar lines. Consider $8$-tuples $T(m,n,k), k=2,3,4$. We perform the computations in Macaulay2; the code is in Section~\ref{sec_macaulay}. 

In (2)  the denominator of $T(m,n,2)$ modulo $p$ is $(m^2+n^2)^8$ and the numerator is $$12ip^{2}\left(m^{2}+n^{2}\right)^{4}\left(m^{2}-2mn-n^{2}\right)\left(m^{2}+2mn-n^{2}\right).$$

Thus it is enough to show that $$\sum \frac{\left(m^{2}-2mn-n^{2}\right)\left(m^{2}+2mn-n^{2}\right)}{(m^2+n^2)^4}\equiv 0 \pmod p,$$

and the numerator of this expression is $m^4-6m^2n^2+n^4$, hence we may use Lemma~\ref{lem}. 

In (3) the denominator of $T(m,n,3)$ modulo $p$ is $(m^2+n^2)^{12}+p...$ and the numerator is $$\left(-12i-12\right)p\cdot(m^2+n^2)^8\left(m^{2}-2mn-n^{2}\right)\left(m^{2}+2mn-n^{2}\right)$$ and the proof is the same as above.

In (4) the denominator  of $T(m,n,4)$ modulo $p$ is  $(m^2+n^2)^{16}+p..$ and the numerator is 

$$8(m^2+n^2)^{12}\left(m^{2}-2mn-n^{2}\right)\left(m^{2}+2mn-n^{2}\right)+p...$$ and the rest of the proof is the same.

\end{proof}

For $S_p^{(5)}$ the exponent of $p$ in
\begin{multline}\frac{1}{(n+im)^5}+\frac{1}{(p-n+im)^5}+\frac{1}{(n+i(p-m))^5}+\frac{1}{(p-n+i(p-m))^5}+\\
+\frac{1}{(m+in)^5}+\frac{1}{(p-m+in)^5}+\frac{1}{(m+i(p-n))^5}+\frac{1}{(p-m+i(p-n))^5}\end{multline}

is controlled by

 $$\frac{p^{3}\left(m^{4}-4m^{3}n-6m^{2}n^{2}+4mn^{3}+n^{4}\right)\left(m^{4}+4m^{3}n-6m^{2}n^{2}-4mn^{3}+n^{4}\right)}{\left(m^{2}+n^{2}\right)^{8}}$$

In the above formula, we throw away all the terms containing $p$ in the denominator and those containing $p^4$ in the numerator and then divide the numerator by $-70(i-1)$. Again, it is not difficult to prove the congruence $S^{(5)}_p\equiv 0 \pmod {p^4}$, but the proof is not identical to the proof of Theorem~\ref{th1}, and we do not see how to proceed in the general case, i.e. how to write a general formula for the coefficient behind the lowest power of $p$ in the numerator of $T(m,n,k)$. 

A straightforward calculation yields divisibility of $$S^{(5)}_{p},S^{(6)}_{p},S^{(7)}_{p},S^{(8)}_{p},S^{(9)}_{p},S^{(10)}_{p},S^{(11)}_{p},S^{(12)}_{p} $$ by $p^4, p^3,p^2,p,p^4, p^3,p^2,p$ correspondingly. Motivated by these computations, we propose the following conjecture.

\begin{conjecture}[Wolstenholme's Higher-Power Congruences for Gaussian Integers]
\label{conj:main}
For any integer $k \geq 1$ and prime $p > 17$,
the sum $S_p^{(k)}$ satisfies:
\begin{equation}
S_p^{(k)}=\sum_{\substack{1\leq n,m\leq p-1\\(p,n^2+m^2)=1}}\frac{1}{(mi+n)^{k}} \equiv 0 \pmod{p^{m(k)}}, \quad 
m(k) = \begin{cases}
4 & \text{if } k \equiv 1 \pmod{4} \\
3 & \text{if } k \equiv 2 \pmod{4} \\
2 & \text{if } k \equiv 3 \pmod{4} \\
1 & \text{if } k \equiv 0 \pmod{4}
\end{cases}
\end{equation}
One can write $m(k)$ as $m(k)=4-(k-1)\pmod 4.$ 
\end{conjecture}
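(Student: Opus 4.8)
The plan is to treat all tuples $T(m,n,k)$ uniformly by isolating the symmetry that produced the factor $p^{3}$ in Theorem~\ref{th1}. Set $u=n+mi$, $\bar u=n-mi$ and $s=p(1+i)$. First I would check that the eight summands of $T(m,n,k)$ split into four pairs of the form $\{w,\,s-w\}$, where the four representatives may be taken as $u,\ \bar u+pi,\ i\bar u,\ p+iu$; these reduce mod $p$ to $u,\bar u,i\bar u,iu$, and the partners $s-w$ reduce to their negatives. Writing each representative as $w=w_{0}+p\delta$ with $(w_{0},\delta)$ running over $(u,0),(\bar u,i),(i\bar u,0),(iu,1)$, and setting $\eta=(1+i)-\delta$, a geometric expansion in $p$ gives
\[
\frac1{w^{k}}+\frac1{(s-w)^{k}}=\sum_{\ell\ge0}\binom{k+\ell-1}{\ell}\,\frac{(-1)^{\ell}\delta^{\ell}+(-1)^{k}\eta^{\ell}}{w_{0}^{\,k+\ell}}\,p^{\ell}.
\]
Summing the four pairs yields the key identity
\[
T(m,n,k)=\sum_{\ell\ge0}\binom{k+\ell-1}{\ell}\,p^{\ell}\,\Sigma_{\ell},\qquad \Sigma_{\ell}=A_{\ell}\,u^{-(k+\ell)}+B_{\ell}\,\bar u^{-(k+\ell)},
\]
where $A_{\ell},B_{\ell}\in\ZZ[i]$ are assembled from the four pairs and a global factor $i^{-(k+\ell)}$, and depend only on $k$ and $\ell$ modulo $4$.

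The second step is a finite, $p$-independent verification. One shows $A_{\ell}=B_{\ell}=0$ for every $\ell<\ell_{0}:=m(k)-1$: at $\ell=0$ the common factor is $(1+(-1)^{k})(1+i^{-k})$, nonzero only for $k\equiv0\pmod4$, and a four-case analysis shows the first surviving index is $\ell_{0}=3-((k-1)\bmod4)=m(k)-1$. This is exactly the general formula for \emph{the coefficient behind the lowest power of $p$} that the authors report missing, and it reproduces the tabulated constants $2(1-i),\,12i,\,-12(1+i),\,8$ for $k=1,2,3,4$. Because $T(m,n,k)$ is invariant under $(m,n)\leftrightarrow(n,m)$, which sends $u\mapsto i\bar u$, $\bar u\mapsto-iu$, and because $N:=k+\ell_{0}$ is always a multiple of $4$ (this is precisely what $m(k)=4-((k-1)\bmod4)$ encodes), one obtains $A_{\ell_{0}}=B_{\ell_{0}}$. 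Hence
\[
\Sigma_{\ell_{0}}=A_{\ell_{0}}\bigl(u^{-N}+\bar u^{-N}\bigr)=A_{\ell_{0}}\cdot\frac{2\,\mathrm{Re}\bigl((n+mi)^{N}\bigr)}{(m^{2}+n^{2})^{N}},
\]
and, since the lower terms drop out modulo $p^{m(k)}$, the conjecture reduces to the single generalised Wolstenholme congruence
\[
\sum_{\substack{1\le n,m\le p-1\\(p,m^{2}+n^{2})=1}}\frac{\mathrm{Re}\bigl((n+mi)^{N}\bigr)}{(m^{2}+n^{2})^{N}}\equiv0\pmod p,\qquad 4\mid N,
\]
of which Lemma~\ref{lem} is the case $N=4$.

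For the third step I would prove this congruence exactly as in Lemma~\ref{lem}: reduce $(m^{2}+n^{2})^{-N}\equiv(m^{2}+n^{2})^{e}$ with $e\equiv-N\pmod{p-1}$, expand the numerator into monomials $m^{s}n^{t}$ of common degree $N+2e$, and use $\sum_{m=1}^{p-1}m^{s}\equiv-1$ or $0$ according as $(p-1)\mid s$ or not. A monomial survives only if $(p-1)\mid s$ and $(p-1)\mid t$, forcing $(p-1)\mid(N+2e)$; but $N+2e\equiv-N\pmod{p-1}$, so this is impossible as soon as $(p-1)\nmid N$, in which case the sum vanishes and, since then $e\ge1$, the excluded pairs with $p\mid m^{2}+n^{2}$ contribute a factor $(m^{2}+n^{2})^{e}\equiv0$ and may be adjoined freely. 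This already settles the conjecture for every $k$ with $(p-1)\nmid(k+m(k)-1)$, in particular for all $k\le p-5$, where $0<N<p-1$.

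The hard part, and I expect the true content of the conjecture, is the remaining arithmetic case $(p-1)\mid N$, which for fixed $p$ recurs for infinitely many large $k$ (the first instance at $p=19$ being $k=33$). There $e$ is a multiple of $p-1$, so $(m^{2}+n^{2})^{-N}\equiv1$ and the power-sum cancellation collapses; simultaneously the excluded pairs can no longer be adjoined for free when $p\equiv1\pmod4$, since $(m^{2}+n^{2})^{e}\equiv1\not\equiv0$ there. Overcoming this would require either a direct evaluation of $\sum_{m,n}\mathrm{Re}\bigl((n+mi)^{N}\bigr)$ over the coprime pairs when $4\mid N$ and $(p-1)\mid N$, or an interference argument in which the vanishing of $\Sigma_{\ell_{0}}$ modulo $p$ is replaced by a cancellation spread across $\Sigma_{\ell_{0}},\Sigma_{\ell_{0}+1},\dots$; the hypothesis $p>17$ is presumably there to rule out the small-prime degeneracies, such as vanishing binomials $\binom{k+\ell-1}{\ell}$ and low-degree coincidences, in the range where the non-degenerate argument already applies.
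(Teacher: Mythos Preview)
The statement you address is a \emph{conjecture} in the paper, not a theorem; the paper offers no proof. After handling $k=1,2,3,4$ in Theorem~\ref{th2} by explicit Macaulay2 computation of the numerator of $T(m,n,k)$, the author writes that ``we do not see how to proceed in the general case, i.e.\ how to write a general formula for the coefficient behind the lowest power of $p$ in the numerator of $T(m,n,k)$.'' There is therefore no paper-proof to compare your attempt against.

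Your proposal in fact goes well beyond what the paper proves. The pairing of the eight summands of $T(m,n,k)$ into four pairs $\{w,s-w\}$ with $s=p(1+i)$, followed by the $p$-adic binomial expansion, gives exactly the closed formula the author says is missing: your coefficients $A_\ell,B_\ell$ depend only on $k,\ell\bmod 4$, the vanishing for $\ell<\ell_0$ explains structurally why the paper's numerators acquire the extra powers of $p$, and the observation that $N=k+\ell_0$ is always a multiple of $4$ is the reason the factor $(m^2-2mn-n^2)(m^2+2mn-n^2)=\mathrm{Re}\bigl((n+mi)^4\bigr)$ recurs in the paper's explicit outputs. Your recovered constants $2(1-i),12i,-12(1+i),8$ match the paper's tabulated numerators for $k=1,2,3,4$. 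Combined with the Lemma~\ref{lem}-style power-sum argument, this proves the conjecture for every $k$ with $(p-1)\nmid N$, hence for all $k\le p-5$; the paper itself only proves $k\le 4$ and checks $k\le 12$ numerically.

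The gap you flag for $(p-1)\mid N$ is genuine and is not an artefact of your method. In that regime $(m^2+n^2)^{-N}\equiv1$ and the reduced sum $\sum(u^{-N}+\bar u^{-N})$ over admissible pairs is typically a small nonzero constant modulo $p$ (for $p\equiv1\pmod4$ one gets $2(p-1)(p-3)\equiv 6$; for $p\equiv3\pmod4$ with $(p+1)\nmid N$ one gets $4$), so the single-term argument cannot close the case. As you say, either the binomial factor $\binom{k+\ell_0-1}{\ell_0}$, the constant $A_{\ell_0}$, or a cancellation against the higher $\Sigma_\ell$ must intervene. You have not resolved this, so what you have is a substantial partial proof---already stronger than anything in the paper---rather than a proof of Conjecture~\ref{conj:main} in full.
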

A lower bound of the form $p > ck$ for some absolute constant $c$ may be required, but our experiments do not suggest this.

\begin{table}[H]
\centering
\caption{Irregular cases for $S_p^{(k)} \pmod{p^m}$ with $p < 1000$ and $1 \leq k \leq 12$.}
\label{irregular}
\begin{tabular}{lllll}
\toprule
Power $k$ & Primes $p$ & Expected & Observed & Type \\
\midrule

1 & 3,5 & $p^4$ & $p^3$ & Weaker \\
 & 31,37 & $p^4$ & $p^5$ & Stronger \\

\midrule
2 & 3 & $p^3$ & $p^3$ & Expected \\
 & 5 & $p^3$ & $p^2$ & Weaker \\
 & 31,37 & $p^3$ & $p^4$ & Stronger \\
\midrule
3 & 3 & $p^2$ & $p^2$ & Expected \\
 & 5 & $p^2$ & $p^1$ & Weaker \\
 & 31,37 & $p^2$ & $p^3$ & Stronger \\
\midrule
4 & 3,5 & $p^1$ & None & None \\
 & 31,37 & $p^1$ & $p^2$ & Stronger \\
\midrule
5 & 3 & $p^4$ & $p^3$ & Weaker \\
 & 7,67,877 & $p^4$ & $p^5$ & Stronger \\
\midrule
6 & 7,67,877 & $p^3$ & $p^4$ & Stronger \\
\midrule
7 & 3,5 & $p^2$ & $p^1$ & Weaker \\
  & 7,67,877 & $p^2$ & $p^3$ & Stronger \\
\midrule
8 & 3,5 & $p^1$ & None & None \\
  & 67,877 & $p^1$ & $p^2$ & Stronger \\
\midrule
9 & 7,13 & $p^4$ & $p^3$ & Weaker \\
 & 11 & $p^4$ & $p^5$ & Stronger \\
\midrule
10 & 3 & $p^3$ & $p^2$ & Weaker \\
 & 11 & $p^3$ & $p^4$ & Stronger \\
\midrule
11 & 3,5,7,13 & $p^2$ & $p^1$ & Weaker \\
 & 11 & $p^2$ & $p^3$ & Stronger \\
\midrule
12 & 3,5,7,13 & $p^1$ & None & None \\
\bottomrule
\end{tabular}
\end{table}

Table~\ref{irregular} enumerates all observed irregularities for $p<1000, 1\leq k\leq 12$.
\begin{itemize}
    \item \(\text{Expected}\): The conjectured congruence \(S_p^{(k)} \equiv 0 \pmod{p^{4-r}}\) where \(r = k-1 \pmod{4},  r\in\{0,1,2,3\}\).
    \item \(\text{Observed}\): The actual congruence found in computations.
    \item \(\text{Type}\): 
        \begin{itemize}
            \item \textbf{Weaker}: Observed modulus is less than expected.
            \item \textbf{Stronger}: Observed modulus is greater than expected.
            \item \textbf{None}: No divisibility.
        \end{itemize}
\end{itemize}

\section{Discussion and possible directions}
\label{sec4}

\subsection{Polynomial Analogues} Wolstenholme's theorem admits the following polynomial interpretation. Let $$f(x)=\prod_{k=1}^{p-1}(x-k)=x^{p-1}+a_1x^{p-2}+\dots+a_{p-2}x+a_{p-1}.$$

Then $a_{p-2}\equiv 0 \pmod {p^2}.$ Indeed, $-a_{p-2}/a_{p-1}$ is the sum of the inverses of the roots of $f(x)$. 

For Gaussian integers define $r=(p-1)^2$ and  $$g_p(x) = \prod_{\substack{1\leq n,m\leq p-1\\(p,n^2+m^2)=1}} (x-(n+mi))=x^{(p-1)^2}+a_1x^{(p-1)^2-1}+\dots + a_{r-1}x+a_{r}.$$

Then $$\frac{-a_{r-1}}{a_{r}}=\sum_{\substack{1\leq n,m\leq p-1\\(p,n^2+m^2)=1}} \frac{1}{n+mi}$$

The polynomial $g_p$ has nontrivial coefficients. For example, consider the following polynomials $g_p(x) \pmod p$

$$g_{11}(x)\equiv x^{100}+x^{80}+x^{60}+x^{40}+x^{20}+1 \pmod {11}$$

$$g_{19}(x)\equiv x^{324}+x^{288}+x^{252}+x^{216}+x^{180}+x^{144}+x^{108}+x^{72}+x^{36}+1 \pmod {19}$$

$$g_{13}(x)\equiv x^{120}+3x^{108}+6x^{96}-3x^{84}+2x^{72}-5x^{60}+2x^{48}-3
      x^{36}+6x^{24}+3x^{12}+1 \pmod {13}$$

\begin{align*}
g_{17}(x) &\equiv  x^{224} + 3x^{208} + 6x^{192} - 7x^{176} - 2x^{160} \\
&\quad + 4x^{144} - 6x^{128} + 2x^{112} - 6x^{96} + 4x^{80} \\
&\quad - 2x^{64} - 7x^{48} + 6x^{32} + 3x^{16} + 1 \pmod {17}
\end{align*}

Our findings lead to the following conjectural pattern.
\begin{conjecture}
If $p=4k+3>3$ then $$g_p(x)  \equiv 1+x^{2(p-1)}+x^{4(p-1)}+\dots+ x^{(p-1)^2}\equiv \frac{1-x^{p^2-1}}{1-x^{2(p-1)}} \pmod p.$$ 

If $p=4k+1>5$ then $$g_p(x) \equiv 1 + b_1x^{p-1}+b_2x^{2(p-1)}+\dots+x^{(p-1)(p-3)} \pmod p.$$ Is there any closed formula for the coefficients $b_k\equiv a_{k(p-1)}\pmod p$ ?
\end{conjecture}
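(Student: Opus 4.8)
The plan is to determine $g_p(x)\bmod p$ \emph{exactly}, case by case on $p\bmod 4$, by identifying the image of the index set under the reduction $\ZZ[i]\to\FF_p[i]$ and then invoking the standard factorisations of $x^{p^2}-x$ and $x^p-x$. Two observations set this up. First, reduction mod $p$ is injective on $\{(n,m):1\le n,m\le p-1\}$, so $g_p\bmod p$ equals $\prod_\gamma(x-\gamma)$ with $\gamma$ ranging over the image in $\FF_p[i]$ of the constrained index set. Second, although $g_p$ itself need not have coefficients in $\ZZ$, its reduction lies in $\FF_p[x]$: the reduced index set is stable under $\gamma\mapsto\bar\gamma$ since $\{1,\dots,p-1\}\bmod p=\FF_p^\times$ is closed under negation, so complex conjugation fixes the coefficients of $g_p\bmod p$.

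\emph{Case $p\equiv 3\pmod 4$.} Here $\FF_p[i]=\FF_{p^2}$ and the constraint $p\nmid n^2+m^2$ is automatic, so the relevant image is $\{a+bi:a,b\in\FF_p^\times\}$, i.e. the elements of $\FF_{p^2}$ whose two coordinates in the basis $\{1,i\}$ are both nonzero. I would combine $\prod_{\alpha\in\FF_{p^2}}(x-\alpha)=x^{p^2}-x$, $\prod_{a\in\FF_p}(x-a)=x^p-x$, and $\prod_{b\in\FF_p}(x-bi)=x^p+x$ — the last because $(bi)^{p-1}=i^{p-1}=(-1)^{(p-1)/2}=-1$ when $p\equiv 3$, so the $p$ elements $bi$ are exactly the roots of $x(x^{p-1}+1)$ — via inclusion–exclusion over the two coordinate lines $\FF_p$ and $i\FF_p$ (which intersect only in $0$):
$$g_p(x)\equiv\frac{x\,(x^{p^2}-x)}{(x^p-x)(x^p+x)}=\frac{x^{p^2-1}-1}{x^{2(p-1)}-1}\pmod p.$$
Since $p$ is odd, $2(p-1)\mid p^2-1$ with quotient $(p+1)/2$, so the right-hand side collapses to the polynomial $1+x^{2(p-1)}+x^{4(p-1)}+\dots+x^{(p-1)^2}=\frac{1-x^{p^2-1}}{1-x^{2(p-1)}}$, proving the first assertion; this matches the displayed $g_{11}$ and $g_{19}$.

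\emph{Case $p\equiv 1\pmod 4$.} Choose $c\in\FF_p^\times$ with $c^2=-1$; the Chinese remainder theorem gives a ring isomorphism $\FF_p[i]\xrightarrow{\sim}\FF_p\times\FF_p$, $n+mi\mapsto(n+mc,\,n-mc)=:(u,v)$, under which $n^2+m^2=(n+mc)(n-mc)=uv$. The map $(n,m)\mapsto(u,v)$ is a linear bijection of $\FF_p^2$ with $n\ne0\iff u\ne-v$ and $m\ne0\iff u\ne v$, while the coprimality constraint becomes $uv\ne0$; hence the image of the constrained index set is $\Gamma=\{(u,v)\in(\FF_p^\times)^2:u\ne v,\ u\ne-v\}$. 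Working in $(\FF_p\times\FF_p)[x]\cong\FF_p[x]\times\FF_p[x]$, the product $\prod_{(u,v)\in\Gamma}(x-(u,v))$ splits coordinatewise; for each fixed $u\in\FF_p^\times$ there are exactly $p-3$ admissible second coordinates, so each component equals $\prod_{u\in\FF_p^\times}(x-u)^{p-3}=(x^{p-1}-1)^{p-3}$, and the two components agree. Therefore
$$g_p(x)\equiv(x^{p-1}-1)^{p-3}\pmod p,$$
which proves the second assertion and answers the question affirmatively: the coefficient of $x^{k(p-1)}$ in $g_p\bmod p$ is $(-1)^k\binom{p-3}{k}$ (using that $p-3$ is even), so $b_k\equiv(-1)^k\binom{p-3}{k}\pmod p$; in particular $g_p\bmod p$ is palindromic, and this reproduces the listed $g_{13}$ and $g_{17}$. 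In fact the argument uses no lower bound on $p$ beyond being an odd prime in the stated class.

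I do not anticipate a real obstacle: each half is short once the ring-theoretic picture is in place, and the only delicate part is the bookkeeping of the excluded coordinate-line elements and of the coprimality constraint under the two isomorphisms. The genuine limitation is conceptual — the proof leans entirely on $\FF_p[i]$ being $\FF_{p^2}$ or $\FF_p\times\FF_p$ together with a clean product formula, and it is unclear how to transplant it to the polynomials attached to $S_p^{(k)}$ for $k>1$ or to extensions of $\FF_p$ of higher degree.
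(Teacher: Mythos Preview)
Your argument is correct, and in fact it settles what the paper leaves open: the statement is presented there as a \emph{conjecture}, supported only by the computed examples $g_{11},g_{19},g_{13},g_{17}$, with no proof offered. Your approach---identifying $g_p\bmod p$ with a product over an explicit subset of $\FF_p[i]$ and then using the factorisations of $x^{p^2}-x$, $x^p-x$, $x^p+x$ (for $p\equiv 3\pmod 4$) or the Chinese-remainder splitting $\FF_p[i]\cong\FF_p\times\FF_p$ (for $p\equiv 1\pmod 4$)---is clean and goes beyond what the paper asks: you obtain the closed form $g_p(x)\equiv(x^{p-1}-1)^{p-3}\pmod p$ in the second case, hence $b_k\equiv(-1)^k\binom{p-3}{k}\pmod p$, which answers the paper's explicit question about the $b_k$. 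Your remark that the hypotheses $p>3$ and $p>5$ are unnecessary is also correct; the argument works for every odd prime in the stated residue class. (Incidentally, this shows the count ``$(p-1)^2-(p-1)$'' given in the paper's introduction for $p\equiv 1\pmod 4$ is off---the correct degree is $(p-1)(p-3)$, as your formula and the paper's own $g_{13},g_{17}$ confirm.)
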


\subsection{Binomial coefficients for Gaussian integers} By Vieta's theorem, Theorem~\ref{th2} implies  the divisibility of the coefficients of $g_p(x)$ behind $x^1,x^2,x^3,x^4$ by the powers of $p$, i.e. the congruences $a_{r-k}\equiv 0 \pmod {p^{5-k}}$ for $k=1,2,3,4$. 

Substituting $x=pA+ipB, A,B\in\mathbb Z$ into $g_p(x)$ we get
\begin{theorem}
\label{th5}
For a prime $p>5$ and  $A,B\in\ZZ_{>1}$ we have

$$\prod_{\substack{1\leq n,m\leq p-1\\(p,n^2+m^2)=1}} (pA+ipB-(n+mi))\equiv \prod_{\substack{1\leq n,m\leq p-1\\(p,n^2+m^2)=1}}(n+mi) \pmod {p^5}.$$
\end{theorem}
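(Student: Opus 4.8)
The plan is to expand the left-hand product as a polynomial in $p$ by substituting $x = pA + ipB = p(A+iB)$ into $g_p(x)$ and tracking which coefficients of $g_p$ survive modulo $p^5$. Write $w = A + iB \in \ZZ[i]$, so the left-hand side is $g_p(pw)$. Using the expansion $g_p(x) = \sum_{j=0}^{r} a_j x^{r-j}$ with $a_0 = 1$ and $a_r = \prod (n+mi)$, we get
\begin{equation}
g_p(pw) = \sum_{j=0}^{r} a_j\, p^{\,r-j} w^{\,r-j} = a_r + a_{r-1}(pw) + a_{r-2}(pw)^2 + a_{r-3}(pw)^3 + a_{r-4}(pw)^4 + \sum_{j=0}^{r-5} a_j p^{r-j} w^{r-j}.
\end{equation}
Since $r = (p-1)^2 \geq 16$ for $p > 5$, every term in the final sum carries a factor $p^{r-j}$ with $r-j \geq 5$, hence vanishes modulo $p^5$. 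So modulo $p^5$ we are left with $a_r + a_{r-1} pw + a_{r-2} p^2 w^2 + a_{r-3} p^3 w^3 + a_{r-4} p^4 w^4$.

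Next I would invoke the divisibility of the low-order coefficients of $g_p$. By Vieta's formulas the quantity $-a_{r-1}/a_r = S_p^{(1)}$, $a_{r-2}/a_r$ is (up to sign and known symmetric-function corrections) controlled by $S_p^{(1)}$ and $S_p^{(2)}$, and similarly for $a_{r-3}, a_{r-4}$ in terms of $S_p^{(1)},\dots,S_p^{(4)}$ via Newton's identities. Concretely, since $a_r$ is a unit mod $p$ (it is a product of Gaussian integers coprime to $p$), Theorem~\ref{th2} gives $a_{r-1} \equiv 0 \pmod{p^4}$, $a_{r-2} \equiv 0 \pmod{p^3}$, $a_{r-3} \equiv 0 \pmod{p^2}$, and $a_{r-4} \equiv 0 \pmod{p}$ — this is exactly the statement $a_{r-k} \equiv 0 \pmod{p^{5-k}}$ recorded just before the theorem. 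Therefore $a_{r-1}pw \equiv 0$, $a_{r-2}p^2w^2 \equiv 0$, $a_{r-3}p^3w^3 \equiv 0$, and $a_{r-4}p^4w^4 \equiv 0$, all modulo $p^5$, and we conclude $g_p(pw) \equiv a_r = \prod_{(p,n^2+m^2)=1}(n+mi) \pmod{p^5}$, which is the claim.

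The only genuine subtlety is justifying $a_{r-k} \equiv 0 \pmod{p^{5-k}}$ from Theorem~\ref{th2}: Newton's identities express the power sums $S_p^{(k)}$ in terms of the elementary symmetric functions of the \emph{reciprocals} of the roots, i.e.\ of the ratios $a_{r-k}/a_r$, and one must check inductively that no unexpected cancellation or denominator spoils the $p$-adic valuation — but since $a_r$ is a $p$-adic unit and the recursion $S_p^{(k)} = -e_k' + (\text{polynomial in } e_1',\dots,e_{k-1}', S_p^{(1)},\dots,S_p^{(k-1)})$ (where $e_j' = \pm a_{r-j}/a_r$) is triangular, the bounds propagate cleanly: $v_p(e_1') \geq 4$ forces $v_p(e_2') \geq 3$ given $v_p(S_p^{(2)}) \geq 3$, and so on. I expect this bookkeeping to be the main (though routine) obstacle; everything else is the trivial observation that $p^{r-j}$ kills high-degree terms because $r$ is large. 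A remark worth adding is that the hypothesis $A, B > 1$ in the statement is not actually needed for this argument — it works for all $A, B \in \ZZ$, since $w \in \ZZ[i]$ is arbitrary.
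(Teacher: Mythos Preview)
Your proof is correct and follows exactly the paper's approach: the paper simply states that, by Vieta's theorem, Theorem~\ref{th2} gives $a_{r-k}\equiv 0 \pmod{p^{5-k}}$ for $k=1,2,3,4$, and then substitutes $x=pA+ipB$ into $g_p(x)$. Your Newton-identity bookkeeping (which the paper skips) and your remark that the hypothesis $A,B>1$ is superfluous are both valid additions.
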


This may be seen as a $\mathbb{Z}[i]$-analogue of  
\begin{equation*}  
(np-1)\cdot(np-2)\cdot \ldots \cdot (np-p+1) \equiv (p-1)! \pmod{p^3},  
\end{equation*}  
which is equivalent to $\binom{np-1}{p-1} \equiv 1 \pmod{p^3}$. 

Thus, one could define a Gaussian integer analog of the binomial coefficients.

\begin{definition}
For integer numbers $A\geq C\geq 1,B\geq D\geq 1$ define the \emph{Gaussian binomial coefficient}:
\begin{equation}
\left[\substack{A+Bi \\ C+Di}\right] := \frac{\prod\limits_{\substack{0\leq n\leq  C-1,\\ 0\leq m\leq  D-1}} (A+Bi-(n+mi))}{\prod\limits_{\substack{1\leq n\leq C, \\ 1\leq m\leq D}} (n+mi)}
\end{equation}
\end{definition}


Unfortunately, such binomial coefficients are not Gaussian integers in general and we do not know much about their properties. However we can reformulate Theorem~\ref{th5} as

\begin{theorem} Let $A,B \in \mathbb{Z}_{\geq 1}$ and prime $p\equiv 3 \pmod 4, p>3$. Then
\begin{equation}
\left[\substack{pA-1+(pB-1)i \\ (p-1)+(p-1)i}\right] \equiv 1 \pmod{p^5}.
\end{equation}

\end{theorem}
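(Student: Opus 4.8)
The plan is to unwind the definition of the Gaussian binomial coefficient and reduce the claim directly to Theorem~\ref{th5}. Taking in the definition the top entry $pA-1+(pB-1)i$ and the bottom entry $(p-1)+(p-1)i$ (so that $C=D=p-1$), the hypotheses $A,B\geq 1$ and $p\geq 2$ give $pA-1\geq p-1\geq 1$ and $pB-1\geq p-1\geq 1$, so the coefficient is a well-defined element of $\mathbb{Q}(i)$. Its numerator is $\prod_{0\leq n,m\leq p-2}\bigl((pA-1-n)+(pB-1-m)i\bigr)$, and the substitution $n\mapsto n-1$, $m\mapsto m-1$ rewrites it as $\prod_{1\leq n,m\leq p-1}\bigl(pA+pBi-(n+mi)\bigr)$; its denominator is $\prod_{1\leq n,m\leq p-1}(n+mi)$.

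Next I would bring in the hypothesis $p\equiv 3\pmod 4$: then $-1$ is a quadratic non-residue mod $p$, so $n^2+m^2\equiv 0\pmod p$ is impossible for $1\leq n,m\leq p-1$ (as $m$ is then invertible mod $p$ and $(nm^{-1})^2\equiv -1$ cannot occur). Equivalently, the side condition $(p,n^2+m^2)=1$ holds automatically over the whole index range. Hence the numerator and denominator above are exactly the two products appearing in Theorem~\ref{th5} — whose hypothesis $p>5$ is met, since $p\equiv 3\pmod 4$ and $p>3$ force $p\geq 7$ — and that theorem yields
$$\prod_{\substack{1\leq n,m\leq p-1}}\bigl(pA+pBi-(n+mi)\bigr)\equiv\prod_{\substack{1\leq n,m\leq p-1}}(n+mi)\pmod{p^5},$$
i.e. their difference equals $p^5\gamma$ for some $\gamma\in\mathbb{Z}[i]$.

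It then remains to pass from this congruence for the two products to one for their quotient. Since $p\equiv 3\pmod 4$, $p$ is inert in $\mathbb{Z}[i]$, and no factor $n+mi$ with $1\leq n,m\leq p-1$ is divisible by the real prime $p$; hence the denominator $D:=\prod_{1\leq n,m\leq p-1}(n+mi)$ is coprime to $p$ in the PID $\mathbb{Z}[i]$. Consequently the Gaussian binomial coefficient minus $1$ equals $(\mathrm{Num}-D)/D=p^5\gamma/D$, which is $\equiv 0\pmod{p^5}$ in the usual sense of congruences between elements of $\mathbb{Q}(i)$ whose denominators are prime to $p$. This gives $\left[\substack{pA-1+(pB-1)i \\ (p-1)+(p-1)i}\right]\equiv 1\pmod{p^5}$.

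The argument is essentially bookkeeping, and the only place I expect any friction is the final step: one must fix the precise meaning of a congruence modulo $p^5$ for a ratio of Gaussian integers and check that the coprimality of $D$ to $p$ makes the division legitimate. No genuinely new difficulty arises here beyond what the proof of Theorem~\ref{th5} already packages; in particular the reindexing of the numerator and the vacuousness of the side condition for $p\equiv 3\pmod 4$ are the substantive observations, and everything else is formal.
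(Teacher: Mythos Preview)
Your proposal is correct and takes exactly the approach the paper intends: the paper presents this theorem simply as a ``reformulation'' of Theorem~\ref{th5}, and you have spelled out the implicit details --- the reindexing of the numerator, the vacuousness of the side condition $(p,n^2+m^2)=1$ when $p\equiv 3\pmod 4$, and the passage from a congruence of products to one of their quotient using coprimality of the denominator to $p$. One small bookkeeping point you may wish to flag: Theorem~\ref{th5} as stated requires $A,B\in\mathbb{Z}_{>1}$ while the present statement allows $A,B\geq 1$; this is an inconsistency in the paper itself (the proof of Theorem~\ref{th5} via substitution into $g_p(x)$ imposes no lower bound on $A,B$), not a gap in your argument.
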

 
It is interesting whether a version of Lucas' theorem holds for such coefficients. 

\begin{conjecture}[Lucas congruence for Gaussian binomial Coefficients]
$$\left[\!\substack{pA+pBi \\ pC+pDi}\!\right] \equiv \left[\!\substack{A+Bi \\ C+Di}\!\right]  \pmod{p^3}.$$
\end{conjecture}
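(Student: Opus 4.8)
\emph{Towards a proof.} The plan is to imitate the classical proof of Ljunggren's refinement of Lucas' congruence, $\binom{pa}{pb}\equiv\binom ab\pmod{p^3}$. I describe it for $p\equiv 3\pmod 4$ (so that $\ZZ[i]/p\cong\FF_{p^2}$ is a field and $p$ is a uniformizer of $\ZZ[i]_{(p)}$), which already appears to be the only regime in which all the pieces cooperate — compare Theorem~\ref{th5}. The first step is to put the left-hand side in product form: reversing the running indices in the numerator ($n\mapsto pC-n$, $m\mapsto pD-m$) gives
$$\left[\substack{pA+pBi \\ pC+pDi}\right]=\prod_{n=1}^{pC}\prod_{m=1}^{pD}\frac{\bigl(p(A-C)+n\bigr)+\bigl(p(B-D)+m\bigr)i}{n+mi}.$$
Split the index set according to whether $p\mid n$ and whether $p\mid m$. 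The pairs with $p\mid n$ and $p\mid m$ contribute, after cancelling a factor $p$ from numerator and denominator and reversing indices once more, exactly $\left[\substack{A+Bi \\ C+Di}\right]$; in every remaining factor the numerator minus the denominator is the \emph{constant} $p\delta$ with $\delta:=(A-C)+(B-D)i$. Hence
$$\left[\substack{pA+pBi \\ pC+pDi}\right]=\left[\substack{A+Bi \\ C+Di}\right]\prod_{(n,m)\in E}\Bigl(1+\frac{p\delta}{n+mi}\Bigr),\qquad E:=\{\,(n,m):1\le n\le pC,\ 1\le m\le pD,\ p\nmid n\text{ or }p\nmid m\,\}.$$
For $(n,m)\in E$ the element $n+mi$ is a $p$-adic unit (since $p\mid n^2+m^2$ forces $p\mid n$ and $p\mid m$), and $\left[\substack{A+Bi \\ C+Di}\right]$ is $p$-integral at least when $\min(C,D)<p$, so it remains to show the product over $E$ is $\equiv 1\pmod{p^3}$.

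Writing $x_{n,m}:=p\delta/(n+mi)$, each $x_{n,m}$ has $v_p\ge 1$, so all triple products lie in $p^3\ZZ[i]_{(p)}$ and
$$\prod_{(n,m)\in E}(1+x_{n,m})\equiv 1+p\delta\,\Sigma_1+\tfrac12\,p^2\delta^2\,(\Sigma_1^2-\Sigma_2)\pmod{p^3},\qquad \Sigma_1:=\sum_{(n,m)\in E}\frac{1}{n+mi},\qquad \Sigma_2:=\sum_{(n,m)\in E}\frac{1}{(n+mi)^2}.$$
Thus the conjecture reduces to two Wolstenholme-type congruences, $\Sigma_1\equiv0\pmod{p^2}$ and $\Sigma_2\equiv0\pmod p$; note that already $\Sigma_1,\Sigma_2\equiv 0\pmod p$ give the plain Lucas statement modulo $p$.

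Both can be attacked through power sums. Each nonzero class of $\FF_{p^2}=\ZZ[i]/p$ is represented by exactly $CD$ pairs of $E$, so $\Sigma_2\equiv CD\sum_{u\in\FF_{p^2}^{\times}}u^{-2}=CD\sum_{v\in\FF_{p^2}^{\times}}v^{2}\equiv 0\pmod p$ because $(p^2-1)\nmid 2$ (and likewise $\Sigma_1\equiv CD\sum_{u\in\FF_{p^2}^{\times}}u^{-1}\equiv 0\pmod p$). For $\Sigma_1$ modulo $p^2$: as $(\ZZ[i]/p^2)^{\times}$ has order $p^2(p^2-1)$ one has $u^{-1}\equiv u^{\,p^4-p^2-1}\pmod{p^2}$ for units $u$, and the excluded pairs contribute nothing, so $\Sigma_1\equiv\sum_{n=1}^{pC}\sum_{m=1}^{pD}(n+mi)^{\,p^4-p^2-1}\pmod{p^2}$. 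Expanding binomially reduces this to the power sums $\sum_{n=1}^{pC}n^{\,j}\pmod{p^2}$; grouping $n=ap+r$ and using $(ap+r)^{j}\equiv r^{j}+jap\,r^{j-1}$ together with $\sum_{r=1}^{p-1}r^{j}\equiv pB_j\pmod{p^2}$ for $(p-1)\nmid j$ (Faulhaber and von Staudt--Clausen) and $\sum_{r=1}^{p-1}r^{j}\equiv -1\pmod p$ for $(p-1)\mid j$, one reduces the two exponents $j$ and $p^4-p^2-1-j$ modulo $p-1$: their sum is $\equiv-1$, so at most one of them is divisible by $p-1$, and the other is then $\equiv-1\pmod{p-1}$, hence odd and (for $p$ large, already $p>3$) at least $3$, so that its Bernoulli number vanishes. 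A short case check then shows every term of the expansion lies in $p^2\ZZ[i]_{(p)}$, giving $\Sigma_1\equiv0\pmod{p^2}$ and completing the argument for $p\equiv3\pmod4$.

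The hard part is the case $p\equiv 1\pmod 4$. Then $p=\pi\bar\pi$ splits in $\ZZ[i]$, and $E$ contains pairs $(n,m)$ with $\pi\mid n+mi$ but $p\nmid n+mi$ — precisely the pairs excluded from the sum in Theorem~\ref{th1} — for which the factor $1+p\delta/(n+mi)$ is $\pi$-adically large, and even $\left[\substack{A+Bi \\ C+Di}\right]$ need not be $p$-integral. One would have to work separately in the two completions $\QQ_p(i)_{\pi}\cong\QQ_p\cong\QQ_p(i)_{\bar\pi}$ and pair these ``bad'' factors with matching pieces of the denominator; I do not see a clean way to do this, and it may be that the conjecture requires the hypothesis $p\equiv 3\pmod 4$, as do the other $\ZZ[i]$-specific results of this paper. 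Two further points deserve clarification: the exact class of $(A,B,C,D)$ for which $\left[\substack{A+Bi \\ C+Di}\right]$ is $p$-integral, so that the congruence is even well-posed; and, in analogy with Jacobsthal's strengthening $\binom{pa}{pb}\equiv\binom ab\pmod{p^{\,3+v_p(ab(a-b))}}$, whether the sharp modulus is $p^{\,3+v_p(\mathcal N)}$ for a suitable Gaussian analogue $\mathcal N$ of $ab(a-b)$ built from $A,B,C,D$ and $\delta$ — the argument above only yields $p^{3}$.
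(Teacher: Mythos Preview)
This statement is a \emph{conjecture} in the paper; no proof is offered there, so there is nothing to compare your attempt against. What you have is a genuine partial proof and should be judged on its own.

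For $p\equiv 3\pmod 4$ your reduction is correct. The factorisation of the ratio as $\prod_{(n,m)\in E}(1+p\delta/(n+mi))$ holds, each factor lies in $1+p\ZZ[i]_{(p)}$, and expanding modulo $p^3$ really does reduce everything to $\Sigma_1\equiv 0\pmod{p^2}$ and $\Sigma_2\equiv 0\pmod p$. Your handling of $\Sigma_2$ via $\sum_{v\in\FF_{p^2}^\times}v^{2}=0$ is fine. The weak link is $\Sigma_1\pmod{p^2}$: the Faulhaber/von~Staudt--Clausen route is in principle viable, but ``a short case check'' hides real bookkeeping (the boundary indices $j\in\{0,N\}$, the extra $r=p$ term in $\sum_{n=1}^{pC}n^{j}$, and $p$-integrality of all Bernoulli numbers that appear). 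A much shorter route uses the paper itself. Tile $E$ by its $CD$ translates of a single $p\times p$ block; since $\sum_{u\in\FF_{p^2}^\times}u^{-2}=0$, every block contributes the same quantity modulo $p^2$, namely
\[
\sum_{\substack{1\le r,s\le p\\(r,s)\ne(p,p)}}\frac{1}{r+si}
\;=\;S_p\;+\;\sum_{r=1}^{p-1}\frac{1}{r+pi}\;+\;\sum_{s=1}^{p-1}\frac{1}{p+si}.
\]
Both edge sums are $\equiv 0\pmod{p^2}$ by classical Wolstenholme (expand $\tfrac{1}{r+pi}\equiv\tfrac1r-\tfrac{pi}{r^2}$ and use $\sum\tfrac1r\equiv 0\pmod{p^2}$, $\sum\tfrac1{r^2}\equiv 0\pmod p$), and $S_p\equiv 0\pmod{p^3}$ already from the $8$-tuple identity \eqref{eq2}, before Lemma~\ref{lem} is even invoked. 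This closes the inert case, subject to your own caveat on $p$-integrality of $\left[\substack{A+Bi\\C+Di}\right]$.

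Your diagnosis of the split case $p\equiv 1\pmod 4$ is on target: factors with $\pi\mid(n+mi)$ are not $\pi$-adically close to $1$, the expansion collapses, and even the right-hand side may fail to be $p$-integral. The paper's own binomial reformulation of Theorem~\ref{th5} is stated only for $p\equiv 3\pmod 4$, which supports your suspicion that the conjecture, as written, may need the same hypothesis.
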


\subsection{Connections to Bernoulli Numbers}  It would be interesting to find a generalization of Glaisher's congruence \cite{glaisher1900congruences}:

$$1+\frac{1}{2}+\frac{1}{3}+\dots + +\frac{1}{p-1}\equiv -\frac{1}{3}p^2B_{p-3}\pmod {p^3},$$
where $B_{p-3}$ is a Bernoulli number; see also \cite{sun2000congruences} for generalizations. Maybe it is possible to find an explicit $p$-adic expansions by $L$-functions, such as in \cite{washington1998p}. It is interesting to investigate the Bernoulli numbers for the sums of powers of  Gaussian integers $$W^{(k)}(A+Bi)=\sum_{1\leq a\leq A, 1\leq b\leq B} (a+bi)^k.$$

\subsection{Sums for general $n+ni$}
For a (not necessarily prime) natural number $n\equiv 1,5\pmod 6$ Ludesdorf \cite{leudesdorf1888some} proved that

$$\sum_{\substack{1\leq k< n\\ (k,n)=1}}\frac{1}{k}\equiv 0 \pmod {n^2}$$ 

One can consider sums of such fractions' powers; they are curiously related to the Bernoulli numbers \cite{slavutskii1999leudesdorf,sun2000congruences, washington1998p}.

So it is natural to ask when $$S_n^{(k)}\equiv 0 \pmod {n^{4-r}}, r= (k-1)\pmod 4$$ holds for an arbitrary number $n$. No definitive pattern is apparent here; let us list some observations. Consider $S_n^{(k)}$ for $1\leq k\leq 8$.
Besides primes, for certain numbers $n=21, 26, 34, 35, 39,40,4249,52, 55,57,58, 63, 68, 74,77,78,82,\\ 84, 91,93, 104,106, 110,111, 114,116,117, 119, 121, 122, 126, 129, 133, 136, 143, \\144,145,146,147, 148,154,155,156,161, 164, ... $ all the congruences hold, the same as for the prime numbers.  If $5|n$ then it is frequent that only the divisibility for $S_n^{(5)}$ holds (among those for $1\leq k\leq 8$), and others do not.

\section{$T(n,m,1)$}
\label{sec_tnm}

The product of the denominators of fractions in $T(m,n,1)$ is 
 \begin{align*}
 m^{8}+4m^{6}n^{2}+6m^{4}n^{4}+4m^{2}n^{6}+n^{8}-4m^{7}p-4m^{6}np- \\
 -12m^{5}n^{2}p-12m^{4}n^{3}p-12m^{3}n^{4}p-12m^{2}n^{5}p-4mn^{6}p-\\
 -4n^{7}p+8m^{6}p^{2}+12m^{5}np^{2}+24m^{4}n^{2}p^{2}+24 m^{3}n^{3}p^{2}+\\
 +24m^{2}n^{4}p^{2}+12mn^{5}p^{2}+8n^{6}p^{2}-10m^{5}p^{3}-18m^{4}np^{3}-\\
 -28m^{3}n^{2}p^{3}-28m^{2}n^{3}p^{3}-18mn^{4}p^{3}-10n^{5}p^{3}+9m^{4}p^{4}+\\
 +16m^{3}np^{4} +18m^{2}n^{2}p^{4}+16mn^{3}p^{4}+9n^{4}p^{4}-6m^{3}p^{5}-\\
 -6m^{2}np^{5}-6mn^{2}p^{5}-6n^{3}p^{5}+2m^{2}p^{6}+2n^{2}p^{6}.\end{align*}

The numerator of the sum  $T(n,m,1)$ is
      
\begin{align*}\left(-2i+2\right)m^{4}p^{3}+\left(12i-12\right)m^{2}n^{2}p^{3}+\left(-2i+2\right)n^{4}p^{3}+\\
+\left(4i-4\right)m^{3}p^{4}+\left(-12i+
      12\right)m^{2}np^{4}+\left(-12i+12\right)mn^{2}p^{4}+\\
      +\left(4i-4\right)n^{3}p^{4}+\left(12i-12\right)mnp^{5}+\left(-2i+2\right)m
      p^{6}+\left(-2i+2\right)np^{6}\end{align*}

\section{Macaulay2 code}
\label{sec_macaulay}

The following \textsc{Macaulay2} code was used: 

\begin{verbatim}

R = ZZ[i]/(i^2 + 1)
R2= R[m, n, p]

--choose the power k in S_p^(k)
--k=1
k=2
--k=3
--k=4

i1=(m+i*n)^k
i2=((p-m)+i*n)^k
i3=(m+i*(p-n))^k
i4=((p-m)+i*(p-n))^k
i5=(n+i*m)^k
i6=((p-n)+i*m)^k
i7=(n+i*(p-m))^k
i8=((p-n)+i*(p-m))^k

--denominator
f= i1*i2*i3*i4*i5*i6*i7*i8
--denominator mod p
f1= sum select(terms(f), t -> (first exponents t)_2 <= 0)
factor(f1)

--numerator
g=i1*i2*i3*i4*i5*i6*i7+ i1*i2*i3*i4*i5*i6*i8+ i1*i2*i3*i4*i5*i7*i8+ 
i1*i2*i3*i4*i6*i7*i8+ i1*i2*i3*i5*i6*i7*i8+
i1*i2*i4*i5*i6*i7*i8+ i1*i3*i4*i5*i6*i7*i8+ i2*i3*i4*i5*i6*i7*i8
--the lowest power of p in the numerator is 4-k
g1=sum select(terms(g), t -> (first exponents t)_2 <= 4-k)
factor(g1)
\end{verbatim}

\section{Acknowledgements}
The author thanks Ivan Vasilyev, Fedor Petrov, and Evgeny Smirnov for inspiring discussions.

\bibliography{../bibliography.bib}
\bibliographystyle{abbrv}

\end{document}